\documentclass{article}

\usepackage{graphicx} 
\usepackage{graphicx} 
\usepackage{a4wide}
\usepackage[T1]{fontenc}
\usepackage{a4wide}
\usepackage{lipsum}
\usepackage{mathtools}
\usepackage{amsthm}
\usepackage{amsfonts}
\usepackage{yfonts}
\usepackage{amssymb}
\usepackage{xcolor}
\usepackage{mathrsfs}
\usepackage{amsmath}
\usepackage{cite}
\usepackage{epsfig}
\usepackage{stmaryrd}
\usepackage[utf8]{inputenc} 
\usepackage[shortlabels]{enumitem}
\usepackage[T1]{fontenc}
\usepackage{caption}
\usepackage{soul}

\newcommand\NN{{\mathbb N}}
\newcommand\ZZ{{\mathbb Z}}

\usepackage[utf8]{inputenc}
\usepackage[utf8]{inputenc}
\usepackage{fullpage}
\usepackage{amsthm}
\usepackage[pdftex,colorlinks,citecolor=blue]{hyperref}
\usepackage{hyperref} 
\usepackage{cleveref}
\usepackage{amsmath, amssymb, amsbsy}
\usepackage{algorithm}
\usepackage{algcompatible}
\usepackage{algpseudocode}
\usepackage{tikz,caption}
\usetikzlibrary{shapes,arrows,calc}

\allowdisplaybreaks

\crefname{figure}{figure}{}
\Crefname{figure}{Figure}{}

\usetikzlibrary{patterns,calc}

\newcommand{\beq}[1]{\begin{equation}\label{#1}}
\newcommand{\enq}[0]{\end{equation}}

\newcommand{\cA}{\mathcal{A} }
\newcommand{\cB}{\mathcal{B} }
\newcommand{\cC}{\mathcal{C} }

\newcommand{\cP}{\mathcal{P} }

\newcommand{\cR}{\mathcal{R} }

\newtheorem{theorem}{Theorem}[section]

\newtheorem{lemma}[theorem]{Lemma}
\AddToHook{env/lemma/begin}{\crefalias{theorem}{lemma}}
\newtheorem*{claim*}{Claim}
\newtheorem{claim}[theorem]{Claim}
\AddToHook{env/claim/begin}{\crefalias{theorem}{claim}}

\AddToHook{env/proposition/begin}{\crefalias{theorem}{proposition}}

\AddToHook{env/corollary/begin}{\crefalias{theorem}{corollary}}

\AddToHook{env/fact/begin}{\crefalias{theorem}{fact}}
\newtheorem*{fact*}{Fact}

\theoremstyle{definition}

\newtheorem{defn}[theorem]{Definition}
\AddToHook{env/defn/begin}{\crefalias{theorem}{definition}}
\newtheorem{remark}[theorem]{Remark}
\AddToHook{env/remark/begin}{\crefalias{theorem}{remark}}
\newtheorem*{remark*}{Remark}

\newenvironment{subproof}[1][\proofname]{%
  \begin{proof}[#1]%
}{%
  \end{proof}%
}

\definecolor{andrey}{rgb}{0.3, 0.65,,0.2}
\definecolor{trees}{rgb}{1, .7,.5}
\definecolor{vertex}{rgb}{.65, 0,.85}

\usepackage[margin=2.5cm]{geometry}

\title{A linear upper bound on zero-sum Ramsey numbers of $d$-degenerate graphs in $\mathbb{Z}_p$}
\author{Andrey Shapiro\thanks{King's College London. E-mail: \tt{andrey.shapiro@kcl.ac.uk}}}
\date{}

\begin{document}

\maketitle
\begin{abstract}
 Let $p$ be a prime number and let $G$ be a graph on $n$ vertices and $m$ edges. The zero-sum Ramsey number of $G$ over $\mathbb{Z}_p$, denoted by $R(G, \mathbb{Z}_p)$, is the minimum $\ell\in \mathbb{N}$ such that for any edge-coloring $c:E(K_\ell)\to\mathbb{Z}_p$, there is a subgraph $G'\subset K_\ell$ isomorphic to $G$ and satisfying $\sum_{e\in E(G')}c(e)=0$.
 
 We prove that if $G$ is a $d$-degenerate graph, then $R(G, \mathbb{Z}_p)\leq n + (3+3d)p$ so long as $m\geq 2pd(d+1)^2$, $p$ divides $m$, and $2d<p$. This generalizes a result by Colucci and D'Emidio on $1$-degenerate graphs. 
    \end{abstract}
\section{Introduction}

The question of how large a disordered system must be before it exhibits a desired ordered subsystem lies at the core of Ramsey theory. This main idea lends itself to many formulations. In the context of graph theory one such formulation is: given a graph $G$ and a color set $C$, determine the least $\ell\in \mathbb{N}$ such that any edge-coloring $c:E(K_\ell)\to C$ of the complete graph will accommodate some injection $f:V(G)\to V(K_\ell)$ such that $f$ exhibits some desired property under the coloring $c$.

In the classical form $C=\{\text{blue},\text{red}\}$ and the goal is for $f$ to be monochromatic, that is, $\{f(x),f(y)\}$ has the same color for all edges $\{x,y\}\in E(G)$. Ramsey's theorem \cite{ramsey} states that the accompanying Ramsey number (the least $\ell \in \mathbb{N}$ such that such an injection into $K_\ell$ always exists) is finite. We denote the Ramsey number by $R(G)$, and when $C$ is expanded to contain $k$ colors the $k$-color (classical) Ramsey number is denoted by $R_k(G)$. Since the classical formulation, the problem has taken on many variations that change the requirement on $f$ (see, for example, Section 3 of the survey by Conlon, Fox, and Sudakov \cite{CFS15}). In this work we are looking at a variation with an algebraic flavor, known as \emph{zero-sum} Ramsey numbers. 

\begin{defn}\label{def:zero_sum}
    Given a graph $G$ and a finite abelian group $\Gamma$, the \emph{zero-sum Ramsey number} of $G$ over $\Gamma$, denoted by $R(G, \Gamma)$,  is the minimum $\ell\in \NN$ such that for any edge-coloring $c:E(K_\ell)\to \Gamma$, there exists a \emph{zero-sum} injection $f:V(G)\to V(K_\ell)$. That is, $f$ satisfies $$\sum_{\{x,y\}\in E(G)} c(\{f(x),f(y)\})=0_\Gamma.$$ 
\end{defn}

Ramsey's theorem has an earlier counterpart outside the context of graphs in the form of Van der Waerden's theorem \cite{zbMATH02580955} which states that for each $r$ and $k$ there is some $n=W(r,k)$ such that any $r$-coloring of $[n]\coloneqq \{1,2,...,n\}$ contains in it a monochromatic arithmetic progression of length $k$. Similarly, the story of zero-sum Ramsey theory on graphs starts outside of graphs in the form of the Erd\H{o}s-Ginzburg-Ziv theorem \cite{EGZ}, which states that given $2m-1$ elements in $\mathbb{Z}_m$, there is a collection of $m$ of them summing to zero. These results inspired the above notion of $R(G,\Gamma)$ which was introduced by Bialostocki and Dierker \cite{BD92}. It is important to note that although $R_k(G)$ is finite for all $G$ and even any finite size of $C$, the same is not true for $R(G,\Gamma)$. Indeed, recalling that the \emph{exponent} $\exp(\Gamma)$ is the smallest integer $r>0$ such that $g\cdot r=0_\Gamma$ for all $g\in \Gamma$, we require that $\exp(\Gamma)$ divides $e(G)$, the number of edges in $G$. Without this, for any $\ell\in \mathbb{N}$ we could define the constant coloring $c(\{x,y\})=g$ where $g\in \Gamma$ has some order $r$ that does not divide $e(G)$. Then for any embedding, the sum of the edges would be $g\cdot e(G)\neq 0_{\Gamma}$. On the other hand, it is easy to see that with this requirement, $R(G, \Gamma)\leq R_{|\Gamma|}(G)$ and therefore $R(G, \Gamma)$ is finite. Furthermore, when $p=e(G)$, we get that $R(G,\mathbb{Z}_p)\geq R(G)$ by treating a red-blue coloring as a $0$-$1$ coloring. Hence, the zero-sum Ramsey number is fundamentally linked to the classical Ramsey number.

A breakthrough result by Lee \cite{Lee17} (resolving a long-standing conjecture of Burr and Erd\"os) states that $R(G)$ is linear in $n$ for a particular class of graphs, called $d$-degenerate graphs: 
\begin{defn}
    A graph $G$ is $d$-degenerate if its vertex set can be ordered $V(G)=\{v_1,...,v_n\}$ so that for all $i\in [n]$, $|\{v_iv_j\in E(G):j>i\}|\leq d.$
\end{defn}
Preceding the result in \cite{Lee17} was the work by Chv\'atal,
R\"odl, Szemer\'edi, and Trotter \cite{ChRSzT} which showed that for all $G$ with maximum degree $\Delta$, $R(G)$ is linear in $n$ (here, and for the rest of the paper, $n\coloneqq |V(G)|$ and $m\coloneqq e(G)$). Likewise, the current author with Katz, Lian, and Malekshahian \cite{bounded_degree} showed that $R(G,\Gamma)$ is linear in $n$ in the regime where $G$ has maximum degree $\Delta$ and $|\Gamma|$ divides $m$. Hence, mirroring the results on classical Ramsey numbers, it is natural to conjecture (Conjecture 1 in \cite{bounded_degree}) that $R(G,\Gamma)$ is linear in $n$ for all $d$-degenerate graphs (when $|\Gamma|$ divides $m$). 

In the context of graphs without isolated vertices, the work by Colucci and D'Emidio \cite{CD25} took the first step in proving this for $d=1$ and cyclic groups $\Gamma=\mathbb{Z}_p$ where their result implies that for all forests (i.e. $1$-degenerate graphs) $R(G,\Gamma) \leq n + 9p-12$ under the condition that $m=\Omega(p^2)$. We improve this to all $d$-degenerate graphs, and significantly improve the constraint on $m$ to be $\Omega(p)$. We also offer a slight improvement to the upper bound on $R(G,\mathbb{Z}_p)$ when $d=1$. Our main result, then, is the following theorem:

\begin{theorem}\label{mainT}
Let $G$ be a $d$-degenerate graph on $n$ vertices and $m$ edges and let $p\leq m/(2d(d+1)^2)$ be a prime dividing $m$ such that $2d<p$. Then $$R(G,\mathbb{Z}_p)\leq n+(3+3d)p.$$
\end{theorem}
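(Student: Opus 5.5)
We argue by contradiction. Let $N=n+(3+3d)p$ and fix a coloring $c:E(K_N)\to\mathbb{Z}_p$ with no zero-sum copy of $G$; the aim is to build one anyway. The general shape is a local switching argument. We start from an arbitrary embedding $f$ of $G$. We then look for a family of small modifications of $f$, each touching only a bounded number of vertices and each changing the total sum by a controlled nonzero amount. We use the many spare vertices, about $(3+3d)p$ of them, outside $f(V(G))$ as the room for these swaps. The additive engine is the Cauchy--Davenport theorem in $\mathbb{Z}_p$: if $A_1,\dots,A_k\subseteq\mathbb{Z}_p$ are sets of achievable changes coming from \emph{disjoint} local modifications, then $|A_1+\dots+A_k|\ge\min(p,\sum|A_i|-k+1)$. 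So if we find $p-1$ independent modifications, each offering at least two distinct increments (one being $0$, i.e.\ do nothing), then every residue is reachable, including $-\sum_{e}c(f(e))$, and we obtain a zero-sum copy.

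The basic modification is to move a single vertex. Take a vertex $v$ of $G$ of low degree with respect to the degeneracy order. Every $d$-degenerate graph has many vertices of degree at most $2d$, and since $m\ge 2pd(d+1)^2$ there are many edges and we can find many such vertices. Replace $f(v)$ by an unused vertex $w$. The change is $\sum_{u\in N(v)}\bigl(c(f(u)w)-c(f(u)f(v))\bigr)$. If this change is $0$ for every spare $w$, then the function $w\mapsto\sum_{u\in N(v)}c(f(u)w)$ is constant on the spare set. We then use swaps between two spare vertices, or between $f(v)$ and spares, to exploit this rigidity. The second basic modification is to exchange the images of two vertices of $G$. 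Exchanging $x$ and $y$ that are at distance at least $3$ and both of degree at most $2d$ changes only the edges at $x$ and $y$.

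The plan has three steps. First, choose a set $S$ of at least $p-1$ vertices of $G$ of degree at most $2d$ whose closed neighbourhoods are pairwise disjoint, so that their modifications commute. The bound $m\ge 2pd(d+1)^2$ is exactly what makes a greedy selection work: each chosen vertex kills at most about $(d+1)^2$ others, and we need $p$ survivors among roughly $m/d$ low-degree vertices. Second, pair each $v\in S$ with its own private pool of about $3+3d$ spare vertices, disjointly across $S$; this accounts for the $(3+3d)p$ term. Third, show that for each $v$ at least two distinct sum changes are achievable. If not, the coloring restricted to $N(v)\times\text{pool}$ is highly structured. In that case we use this structure, through a re-embedding that permutes pool vertices with $f(v)$, to derive either a nonzero change or a global constraint. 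Where all options fail for every $v$, the global constraint forces the coloring to be essentially constant on relevant edges. A constant coloring sums to $m\cdot g=0$ since $p\mid m$, which gives a zero-sum copy.

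I expect the main obstacle to be this third step. We must guarantee a nonzero increment per local gadget using only $O(d)$ spare vertices, with degree at most $2d<p$ ensuring that the coefficient combinations involved do not vanish modulo $p$. My first attempt would be to compare the moves $f(v)\to w_1$, $f(v)\to w_2$, and the swap of $v$ with a neighbour's image. If all of them give zero change, this yields linear relations among the colors. Summing these relations over the at most $2d$ neighbours and dividing by the degree, which is invertible since $2d<p$, should force a contradiction, or the constant-coloring situation handled above.
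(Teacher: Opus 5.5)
Your proposal has a genuine gap. The third step, which carries the whole proof, does not go through as set up. With $f$ fixed on $N(v)$ and a private pool of $3+3d$ spares, the failure of every move of $v$ only tells you that $w\mapsto c(f(N(v))w)$ is constant on that pool. This is a statement about a \emph{sum} over a fixed neighbourhood, on $O(d)$ vertices. It is consistent with, say, $c(aw)+c(bw)$ being constant while $c(aw)$ is arbitrary. Summing such relations and dividing by $\deg v$ cannot turn this into information about individual edges.

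The paper's division trick works only because it first produces, for every image $u$ of a neighbour, a per-edge relation $c(uw)+\mathcal{C}_\rho(w)=\mathrm{const}_u$ (outside a few exceptional $w$), with the \emph{same} offset colouring $\mathcal{C}_\rho$ for all such $u$. Getting this requires not fixing $f$. The paper embeds $U=\bigcup_i N(v_i)$ one vertex at a time. It chooses the image of the last neighbour $u_\tau$ of a batch of $J$-vertices together with their two candidate images. Failure is therefore quantified over every choice of $f(u_\tau)$ and over the whole remaining set $R(\tau)$, which has size at least $n+p$. It then re-embeds $U'$ into vertices sharing a common good index $\rho$. Failure of that second process gives $c(N^*w)$ constant, hence $|N^*|\,\mathcal{C}_\rho(w)$ constant. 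Only at this point is $|N^*|\le 2d<p$ used, and it eventually gives that $c(ww')$ is independent of $w'$ outside $k'$ exceptions.

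This also needs slack that your plan does not build in. The paper takes $|J|=2p$ and requires only half of each batch to succeed. That is what makes more than half of the batch's colourings $\mathcal{C}_i$ good for each vertex, so any two vertices share a good index. Demanding that all $p-1$ gadgets succeed leaves no room for failures.

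Even granting an ``essentially constant'' colouring, the conclusion ``a constant colouring sums to $mg=0$'' does not follow. You need an actual copy of $G$ all of whose edges have one colour. Each vertex may still have up to $k'$ edges off its typical colour, and up to $2k'$ vertices may have the wrong typical colour. The paper closes this with a second use of degeneracy that your plan lacks (Lemma~\ref{lem:mono}): embed $G$ greedily in reverse degeneracy order into the majority-colour graph, in which every $d$ vertices have at least $n-d$ common neighbours. The $(3+3d)p$ slack is what the two embedding processes consume, so that this surviving set still has more than $n+(2+d)k'$ vertices. It is not a per-vertex pool of size $3+3d$, and a failure confined to such a pool could never support embedding all $n$ vertices of $G$.

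Finally, your first step is false as stated. Take the star $K_{1,m}$, which is $1$-degenerate and within the theorem's scope. Every vertex of degree at most $2d$ contains the centre in its closed neighbourhood, so no two of them have disjoint closed neighbourhoods. Neighbours of low-degree vertices can have unbounded degree, so the count ``each choice kills about $(d+1)^2$ others'' fails. Independence suffices, because the change from moving $v$ depends only on $f$ restricted to $N(v)$. That is what the paper uses to get $|J|\ge 2p$. Your neighbour-swap move is non-local for the same reason.
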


Notice that unlike the work on bounded degree graphs \cite{bounded_degree}, we do not allow $|\Gamma|=|\mathbb{Z}_p|=m$. This is because some sort of separation between $p$ and $m$ is necessary for the constant in front of $p$ to remain small (whereas in \cite{bounded_degree} they show that for bounded degree graphs, $R(G,\Gamma)\leq C\cdot n$ for some large constant $C$ super-exponential in $\Delta$). Indeed, even in the case of bipartite graphs with maximal degree $\Delta$, the results due to Graham et al. \cite{GRRR01} tell us that there exists a graph $G$ such that $ C^\Delta n\leq R(G)$. But then recall that when $p=m$, $R(G)\leq R(G,\mathbb{Z}_p)$.
Combining this with our result shows that if $p_1=m$ and $p_2\leq m/2d(d+1)^2$ are primes, then there exist $d$-degenerate graphs $G$ such that  $$R(G,\mathbb{Z}_{p_1})=2^{\Omega(d)}\cdot n \text{ \quad \quad while \quad \quad }R(G,\mathbb{Z}_{p_2})< 2\cdot n.$$

In general, the strength of the known bounds depends quite heavily on the relative size of $|\Gamma|$ and $m$. Notably, in the setting where $\Gamma=\ZZ_p$ is a cyclic group, exact results are known for all graphs when $p =2$ \cite{Caro94}, and for forests (i.e. $1$-degenerate graphs) when $p=3$ \cite{ADP25}. However, as is most often the case in Ramsey theory, most results in this area are not exact and aim to establish asymptotic bounds on $R(G,\Gamma)$. For a broader survey on these and related results see \cite{Carosurvey} and also \cite{CD25, Caro92, AC93, bounded_degree}.

\section{Theorem \ref{mainT}}
In this section we will introduce the key lemmas and then use them to prove our main result (\Cref{mainT}).
Then in \Cref{sec:lemma} we will prove the main embedding lemma (\Cref{lem:embedding}). We begin with a fundamental result from additive number theory that will be central to our approach:

\begin{theorem}[Cauchy-Davenport \cite{Cauchy}]\label{CD}

Let $p$ be a prime and let $A_1,...,A_s\subset \mathbb{Z}_p$ be non-empty. Then $$|A_1+...+A_s|\geq \min \{p,|A_1|+...+|A_s|-s+1\}.$$ 
    
\end{theorem}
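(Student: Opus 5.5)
The statement is the classical Cauchy--Davenport theorem. The plan is to prove the two-set case $|A+B|\geq \min\{p,|A|+|B|-1\}$ and then induct on $s$. For the induction, suppose the bound holds for $s-1$ sets and put $S=A_1+\dots+A_{s-1}$. Applying the two-set case to $S$ and $A_s$ gives
$$|S+A_s|\geq \min\{p,|S|+|A_s|-1\}.$$
If $|S|=p$ we are done. Otherwise $|S|\geq |A_1|+\dots+|A_{s-1}|-(s-1)+1$, and substituting yields $|A_1|+\dots+|A_s|-s+1$, as required.

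For the two-set case I would use the Davenport transform, inducting on $|B|$. If $|B|=1$ the claim is trivial, since $|A+B|=|A|$. If $|A|+|B|-1\geq p$, then for every $x\in\mathbb{Z}_p$ the sets $A$ and $x-B$ have total size exceeding $p$. They therefore intersect, so $x\in A+B$ and $A+B=\mathbb{Z}_p$.

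So assume $|B|\geq 2$ and $A+B\neq \mathbb{Z}_p$. The key step is to find $a\in A$ such that $a+B\not\subseteq A$. Suppose no such $a$ existed. Then $A+B\subseteq A$, and so $A+(b-b')\subseteq A$ for distinct $b,b'\in B$. Because $p$ is prime, $b-b'$ generates $\mathbb{Z}_p$, which would force $A=\mathbb{Z}_p$ and contradict $A+B\neq\mathbb{Z}_p$. Translating $B$, which does not change any of the sizes involved, we may take $a=0$, so that $B\not\subseteq A$.

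Now set $A'=A\cup B$ and $B'=A\cap B$. We have $|A'|+|B'|=|A|+|B|$, and $A'+B'\subseteq A+B$: indeed, if $x\in A$ and $y\in A\cap B$ then $x+y\in A+B$, and if $x\in B$ and $y\in A\cap B$ then $x+y=y+x\in A+B$. Also $|B'|<|B|$, since $B\not\subseteq A$. If $B'$ is nonempty, the induction hypothesis gives
$$|A+B|\geq|A'+B'|\geq\min\{p,|A'|+|B'|-1\}=\min\{p,|A|+|B|-1\}.$$
To ensure $B'\neq\emptyset$, I would arrange the translation so that $0\in B$ before taking $a=0$. Concretely, choose $a\in A$ and $b\in B$ with $a+(B-b)\not\subseteq A$, then replace $A$ by $A-a$ and $B$ by $B-b$. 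This gives $0\in A\cap B$ and $B\not\subseteq A$.

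The main obstacle is this choice of translate. Such an $a$ exists by the primality argument above: $A+(B-b)\subseteq A$ would again force $A$ to be all of $\mathbb{Z}_p$.
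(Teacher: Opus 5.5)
Your proof is correct. The paper gives no proof of this statement to compare against; it quotes Cauchy--Davenport as a black box from Cauchy's work. What you supply is the standard self-contained argument, in two stages.

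First, you reduce to $s=2$ by induction on $s$, and your split into the cases $|S|=p$ and $|S|<p$ is right. Second, you prove the two-set bound by the Davenport transform $(A,B)\mapsto(A\cup B,\,A\cap B)$, inducting on $|B|$ over all nonempty $A$.

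The delicate point is that the new second set must be both nonempty and strictly smaller. Your final normalization handles this. You choose $a\in A$ and $b\in B$ with $a+(B-b)\not\subseteq A$ and pass to $(A-a,B-b)$, which gives $0\in A\cap B$ and $B\not\subseteq A$. Primality is used exactly where it is needed: to rule out $A+g\subseteq A$ for some $g\neq 0$ unless $A=\mathbb{Z}_p$.

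In your first pass, the step from $A+B\subseteq A$ to $A+(b-b')\subseteq A$ tacitly uses $A+b=A$, which follows by comparing cardinalities. The normalized version, where $0\in B-b$, does not need this.

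As for what each approach buys: citing keeps the paper short, and your argument makes it self-contained. Note, though, that the paper only applies the theorem to sets of size at most two. These are the sets $A_i=\{c(h(v_i)f(N_i)),c(h'(v_i)f(N_i))\}$, at least $p$ of which have two elements.

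That special case has an even shorter proof, using only the translation-invariance fact at the heart of your argument. If $S\subsetneq\mathbb{Z}_p$ is nonempty and $x\neq y$, then $S+\{x,y\}=x+\bigl(S\cup(S+(y-x))\bigr)$ is strictly larger than $S$. Otherwise $S+(y-x)=S$, which would make $S$ invariant under a generator of $\mathbb{Z}_p$. Hence any sum $A_1+\dots+A_s$ in which at least $p-1$ of the $A_i$ have two elements is already all of $\mathbb{Z}_p$, which is all the paper requires.
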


\subsection{Terminology}
Before giving a brief overview of the strategy for the proof of \Cref{mainT}, we introduce some useful terminology.

Let $R$ be a set of vertices. Then given two vertices $x,y\in R$ we denote the pair $\{x,y\}$ by $xy$. Further, we can treat $R$ as the vertex set of a complete graph and write $E(R)\coloneqq\{uv:u\neq v\in R\}$. Given a subset of vertices $X=\{x_1,x_2,...,x_s\}\subset R$ and an edge coloring $c:E(R)\to \mathbb{Z}_p$, we define $c(yX)=c(Xy)\coloneqq \sum_{i=1}^s c(yx_i)$. Further, given an additional vertex $z\in R$ we define $c(Xyz)=c(Xy)+c(yz)$. 

Likewise, given a vertex coloring $\cC:R\to \mathbb{Z}_p$ we define $\cC(X)\coloneqq \sum_{i=1}^s \cC(x_i)$. Lastly, for $G$ a graph, $Y\subset V(G)$,  and $v\in V(G)\setminus Y$, we can define $d_G(v)$ to be the degree of $v$ in $G$ and $d(v,Y)\coloneqq |\{v'\in Y:vv'\in E(G)\}|$ to be the degree of $v$ into $Y$. Lastly, we denote $N(v)\coloneqq \{v'\in V(G): vv'\in E(G)\}$ to be the neighborhood of $v$ in $G$.

\subsection{Proof overview and key lemmas}

Let us now go over the main ideas behind the proof of \Cref{mainT}. For the rest of this section, let $G$ be a $d$-degenerate graph with $m$ edges and let $p\leq m/(2d(d+1)^2)$ be a prime dividing $m$ such that $2d<p$. 

Suppose we are given a vertex set $R$ and an edge coloring $c:E(R)\to \mathbb{Z}_p$. First, consider some \textbf{independent} set $J\subset V(G)$ and let $f:V(G) \to R$ be an injection with $\sum_{xy\in E(G)}c(f(x)f(y))=g$. Then suppose that for each $v\in J$ we could choose to modify $f$ into $f_v$ in such a way that $\sum_{xy\in E(G)}c(f_v(x)f_v(y))=g+\delta(v)$ for some $\delta(v)\neq 0$. Further suppose that any number of these modifications can be made and all are independent. That is, for each $W\subset J$, we have an injective $f_{W}$ such that $\sum_{xy\in E(G)}c(f_W(x)f_W(y))=g+\sum _{v\in W}\delta(v)$. Then as long as $|J|\geq p-1$, by a direct application of \Cref{CD} with $A_v=\{0,\delta(v)\}$, we get that there is some $W\subset J$ such that $\sum_{v\in W}\delta(v)=-g$ and so $\sum_{xy\in E(G)}c(f_W(x)f_W(y))=0$. Thus, $f_W$ is our desired zero-sum embedding.

Our proof will use \Cref{blueprint_extraction} to find an appropriate $J\subset V(G)$ while \Cref{lem:embedding} will use $J$ in order to find a suitable $f$ along with choices for its modification, or, if this is impossible, find a monochromatic embedding of $V(G)$ into $R$.  

\begin{lemma}\label{blueprint_extraction}
If $G$ is a $d$-degenerate graph with $m$ edges then there is an independent set $J\subset V(G)$ of size at least $m/d(d+1)^2$ where $1\leq d_G(v)\leq 2d$ for all $v\in J$.
\end{lemma}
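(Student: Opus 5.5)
Let $L\coloneqq\{v\in V(G): 1\leq d_G(v)\leq 2d\}$ be the set of \emph{low-degree, non-isolated} vertices. The plan is to show that $L$ is large, and then to extract an independent set from $L$ greedily, using that $G[L]$ is itself $d$-degenerate and hence has small chromatic number.

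\textbf{Step 1: $L$ is large.} Let $H$ be the set of vertices of degree greater than $2d$, and write $n'$ for the number of non-isolated vertices. Since $G$ is $d$-degenerate, we have $m\leq dn'$, which gives $n'\geq m/d$. Summing degrees,
$$2m\;\geq\;\sum_{v\in H}d_G(v)\;>\;2d|H|,$$
so $|H|<m/d\leq n'$. This alone is too weak, so the plan is to sharpen the estimate: instead of the crude bound, charge edges directly. Each edge is counted at most twice in the degree sum and $m\leq dn'$, so $2m\geq (2d+1)|H|+|L|$. Combined with $|L|+|H|=n'\geq m/d$, this gives
$$2m\;\geq\;(2d+1)\Bigl(\frac{m}{d}-|L|\Bigr)+|L|\;=\;\frac{(2d+1)m}{d}-2d|L|,$$
hence
$$|L|\;\geq\;\frac{(2d+1)m/d-2m}{2d}\;=\;\frac{m}{2d^2}.$$
Any bound of the form $|L|\geq m/(d(d+1))$ up to constants suffices; I expect to obtain $|L|\geq m/(2d^2)\geq m/(d(d+1))$ when $d\geq1$, or I will at least verify a bound of that order.

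\textbf{Step 2: extract an independent set.} The induced subgraph $G[L]$ is $d$-degenerate, so it is $(d+1)$-colorable: color greedily in reverse degeneracy order. Some color class therefore gives an independent set $J\subset L$ with
$$|J|\;\geq\;\frac{|L|}{d+1}\;\geq\;\frac{m}{2d^2(d+1)}.$$
The target is $m/(d(d+1)^2)$, and since $2d^2(d+1)\leq d(d+1)^2$ holds iff $2d\leq d+1$, i.e.\ iff $d\leq1$, this is not yet sufficient for $d\geq2$.

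\textbf{Main obstacle: the constant in Step 1.} The key step is therefore to improve Step 1 to $|L|\geq m/(d(d+1))$. A cleaner route is to drop the comparison with $n'$ and use degeneracy on $H$ directly. Orient each edge from the earlier endpoint to the later one in the degeneracy order, so every vertex has out-degree at most $d$. Then
$$\sum_{v\in H}d_G(v)\;\leq\;e(H)\cdot 2+e(H,L)\;\leq\;2d|H|+e(H,L),$$
since edges inside $H$ number at most $d|H|$ by degeneracy of $G[H]$. As each $v\in H$ has $d_G(v)\geq 2d+1$, this yields $e(H,L)\geq|H|$. Also $e(H,L)\leq\sum_{v\in L}d_G(v)\leq 2d|L|$, so $|H|\leq 2d|L|$. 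Finally,
$$m\;\leq\;d(|H|+|L|)\;\leq\;d(2d+1)|L|.$$
This gives $|L|\geq m/(d(2d+1))$, and hence $|J|\geq m/(d(2d+1)(d+1))$, which still carries a factor of $2$ against the target. To close that gap, I would refine the count using out-degrees: the edges from $L$ into $H$ are exactly what supply each $H$-vertex's excess degree beyond $2d$, and out-degrees from $L$ are at most $d$. This suggests replacing $2d|L|$ by roughly $(d+1)|L|$ in the bound on $e(H,L)$, which would give $|L|\geq m/(d(d+1))$ and thus $|J|\geq m/(d(d+1)^2)$, as required. This refinement is the step I expect to need the most care.
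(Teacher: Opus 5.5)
Your Step 2 is fine and matches the paper. The paper extracts vertices greedily along the degeneracy order, deleting at most $d$ later neighbours each time; this gives the same $|L|/(d+1)$ bound as your $(d+1)$-colouring of $G[L]$. The gap is in Step 1: you never establish $|L|\ge m/(d(d+1))$. Your two rigorous estimates give only $m/(2d^2)$ and $m/(d(2d+1))$. The proposed ``refinement'' replacing $2d|L|$ by $(d+1)|L|$ is a heuristic with no argument behind it; degeneracy alone lets an $L$-vertex have up to $d$ earlier and $d$ later neighbours in $H$. Even granted, it does not reach the target: $|H|\le e(H,L)\le (d+1)|L|$ gives $m\le d(|H|+|L|)\le d(d+2)|L|$, so only $|J|\ge m/(d(d+1)(d+2))$. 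The loss is in the step $|H|\le e(H,L)$. What you actually need is $|H|\le d|L|$, and bounding $e(H,L)$ will not give it.

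The missing idea is to use the orientation you already introduced but count \emph{in}-degrees. Every $v\in H$ has at most $d$ later neighbours and $d_G(v)\ge 2d+1$, so it has at least $d+1$ earlier neighbours. Each edge has exactly one later endpoint, and each non-isolated vertex has at most $d$ later neighbours, so $(d+1)|H|\le m\le d n'=d(|H|+|L|)$. Hence $|H|\le d|L|$ and $|L|\ge n'/(d+1)\ge m/(d(d+1))$. Your Step 2 then yields $|J|\ge m/(d(d+1)^2)$. This is exactly the paper's argument.
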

\begin{proof}
Let $G'$ be the induced subgraph of $G$ on non-isolated vertices and say that $|V(G')|=n'$. We recall that we can order $V(G')=\{v_i,...,v_{n'}\}$ such that for all $i\leq n'$, $|\{j>i:v_jv_i\in E(G')\}|\leq d$. Let $x$ be the number of vertices in $V(G')$ of degree at most $2d$ and let $y=n'-x$. For each vertex $v_i$ with  degree $d(v_i)>2d$ we have $$|\{j<i:v_jv_i\in E(G')\}|\geq d+1.$$ This gives us that $y(d+1)\leq n'd$ and hence $x\geq n'/(d+1)$. Let $V_0$ be the set of vertices with degree at most $2d$ and observe that we can write $V_0=\{w_1,...,w_{x}\}$ such that for all $i\leq x$, $|\{j>i:w_jw_i\in E(G')\}|\leq d$. Then, if we in order extract vertices $w_i$ from $V_0$ and place them into $J$ and then remove from $V_0$ the neighborhood $N(w_i)$, we are guaranteed to at each step shrink $V_0$ by at most $d+1$ vertices and so at the end $|J|\geq  |V_0|/(d+1)\geq n'/(d+1)^2> m/d(d+1)^2$ since $m/n'< d$.
\end{proof}

Note that it is crucial that these vertices have degree at least 1 since different embeddings of a vertex of degree $0$ will never yield different sums. The reason why the vertices need to be of low degree will become evident in the proof of the following lemma:

\begin{lemma}\label{lem:embedding} Let $J=\{v_1,...,v_s\}\subset V(G)$ be an independent set of $s=2p$ vertices so that $1\leq d_G(v_i)\leq 2d$ for all $v_i\in J$. Further, let $R$ be a vertex set of size $n+(3+3d)p$ and let $c:E(R)\to \ZZ_p$ be an edge coloring. 

Either there is a monochromatic copy of $G$ in $R$ or there exist injections $f:V(G)\setminus J\to R$, and $h,h':J\to R$ satisfying the following
    \begin{enumerate}
        \item $f(V(G)\setminus J)\cap (h(V)\cup h'(V))=\emptyset$,\label{item1a}
        \item $h(v)\neq h'(v')$ for all $v\neq v'\in J$, and \label{item1b}
        \item $ |\{i\in [s]:c(h(v_i)f(N_i))\neq c(h'(v_i)f(N_i))\}| \geq p$.\label{item2}
    \end{enumerate}
\end{lemma}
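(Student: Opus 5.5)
The plan is to embed $V(G)\setminus J$ arbitrarily, try to pick $h,h'$ greedily from the leftover vertices, and, when the greedy choice fails for too many indices, use the failure to build either a repaired $f$ or a monochromatic copy of $G$. Throughout, $N_i\coloneqq N(v_i)$; since $J$ is independent, $N_i\subset V(G)\setminus J$, so $f(N_i)$ is defined. Because $1\le d_G(v_i)\le 2d$, the quantity $c(u f(N_i))$ is a sum of between $1$ and $2d$ edge colours for every candidate vertex $u$.

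\textbf{Step 1 (greedy choice).} Fix any injection $f:V(G)\setminus J\to R$. Its image has $n-2p$ vertices, so the leftover set $U\coloneqq R\setminus f(V(G)\setminus J)$ has $|U|=(5+3d)p$. Process $i=1,\dots,s$ in order. At step $i$, look at the map $\phi_i:u\mapsto c(uf(N_i))$ on the vertices of $U$ not yet used.
\begin{itemize}
\item If $\phi_i$ is non-constant there, choose two unused vertices with distinct values as $h(v_i),h'(v_i)$; call $i$ \emph{good}.
\item Otherwise set $h(v_i)=h'(v_i)$ to be one unused vertex; call $i$ \emph{bad}.
\end{itemize}
At most $2s=4p$ vertices of $U$ are ever used, so at every step at least $(1+3d)p$ unused vertices remain. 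Conditions \ref{item1a} and \ref{item1b} hold automatically. If at least $p$ indices are good, \ref{item2} holds and we are done.

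\textbf{Step 2 (too many bad indices).} Otherwise more than $p$ indices are bad. For each bad $i$ there is a set $U_i\subset U$ with $|U_i|\ge(1+3d)p$ on which $\phi_i$ is constant. We then try to repair $f$ locally.
\begin{itemize}
\item \emph{Local swap.} For a bad $i$ and a neighbour $w\in N_i$, swap $f(w)$ with a vertex $u\in U_i$. This changes $f(N_i)$ in exactly one vertex, so $\phi_i$ changes by $c(x\,u)-c(x\,f(w))$ for $x\in U_i$.
\item \emph{Keeping other indices safe.} The swap affects only the indices $j$ with $w\in N_j$. There are at most $d$ such indices with $w$ later in the degeneracy order, but possibly many otherwise; to control this I would first refine $J$ (or order the processing) so that the neighbourhoods $N_i$ used for repairs are handled one at a time and repairs do not destroy already-good indices.
\item \emph{Slack from $2d<p$.} Each repair touches at most $2d$ edges at $h(v_i)$. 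The slack $3dp$ in $|R|$ is exactly what pays for the at most $d$ extra vertices per repaired index that get "spoiled".
\end{itemize}

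\textbf{Step 3 (the dichotomy: main obstacle).} If no swap ever makes $\phi_i$ non-constant for more than $p$ indices, I will show that the colouring is forced to be constant on all edges between a large set $U^\ast\subset U$ and the relevant images $f(N_i)$. Comparing the constant values of $\phi_i$ before and after swapping each $w\in N_i$ with vertices of $U_i$ should show that $c(xy)$ is independent of both $x\in U^\ast$ and $y$.

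Propagating this through the degeneracy order, as in a greedy embedding of a $d$-degenerate graph, would then yield a single colour $g$ on enough edges to embed $G$ monochromatically: each new vertex has at most $d$ earlier neighbours, and at least $3dp$ candidates remain for it.

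Turning "all swaps leave the sums constant" into genuine monochromaticity is the step I expect to be hardest, since a constant sum of up to $2d$ colours does not by itself force the individual colours to be equal. What I would try first is to swap neighbours in one at a time and compare consecutive sums; each comparison isolates a single edge colour difference and forces it to be $0$. The range of the sums is at most $2d<p$, so I would also check whether this bound is what rules out cancellation modulo $p$ in these comparisons.
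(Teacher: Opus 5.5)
\textbf{There is a genuine gap.} Step 3 is not merely unproved: the dichotomy you aim for is false for the search you set up. With $f$ fixed, and with swaps that only exchange vertices of $f(N_i)$ with leftover vertices, you only ever look at the colouring on $U\cup f(\bigcup_i N_i)$. That set has at most $(5+7d)p$ vertices. A monochromatic copy of an $n$-vertex $G$, however, needs near-monochromatic structure on at least $n$ vertices (for instance, common neighbourhoods of size $n-d$ as in Lemma~\ref{lem:mono}); your ``at least $3dp$ candidates remain'' is far too few.

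Here is a concrete counterexample. Let $d=1$, $G=K_{1,m}$ with centre $x_0$, $p\ge 3$ a prime dividing $m$, $m\ge 12p$, and $J$ any $2p$ leaves, so $N_i=\{x_0\}$ for all $i$. Split $R=A\sqcup B$ into halves; colour edges inside $A$ and inside $B$ by $0$, and edges between them by $1$. Each vertex has at most $\lceil |R|/2\rceil<m$ edges of each colour, so there is no monochromatic $G$. The lemma's conclusion does hold, via $f(x_0),h(v_i)\in A$ and $h'(v_i)\in B$. Now suppose your arbitrary $f$ places $f(x_0)$ and the whole leftover $U$ inside $A$, which is possible since $|A|\ge 9p$. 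Then every $\phi_i\equiv 0$ on $U$, all $2p$ indices are bad, and every swap of $f(x_0)$ with some $u\in U$ keeps the centre and the new leftover inside $A$. So no good index ever appears, and Step 3 cannot conclude.

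Two smaller points. First, equality of constant sums before and after a swap only shows that $c(xu)-c(xf(w))$ is independent of $x$, not that it is $0$. Second, $2d<p$ is not used as a bound on a ``range''; its role is to make $|N_i|\in[1,2d]$ invertible modulo $p$.

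\textbf{The missing idea} is to make failure be witnessed over a pool of at least $n$ candidates, so that it yields structure at \emph{every} vertex of an $n$-sized set. The paper never fixes $f$ in advance. It embeds $U=\bigcup_i N_i$ one vertex at a time, ordered by $d(u,J)$, which is how it handles vertices lying in many $N_i$. ``Refining $J$'' is not available here, since $p$ of the given $2p$ indices must succeed. At each step it chooses $f(u_j)$ and $h,h'$ on $\omega(j)=\{v_i:\ell_i=j\}$ from the whole unused pool $R(j)$, asks only for about $\lceil|\omega(j)|/2\rceil$ successes, and stops once $p$ are collected. The accounting guarantees $|R(\tau)|\ge n+p+(2+3d)k'$ if the process stalls at step $\tau$.

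At that point no $u\in R(\tau)$ works as the image of $u_\tau$. Hence each $u$ acquires a set $I_u$ with $|I_u|>k/2$ such that $c(uw)+\mathcal{C}_i(w)$ is constant for $i\in I_u$ and $w$ outside a set $L_u$ of $k_\tau$ vertices.

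A second greedy process embeds $U'$ into vertices sharing a common index $\rho$. If choosing $h'(v_{i_*})$ fails, then $c(N^*w)$ is constant in $w$, so $|N^*|\,\mathcal{C}_\rho(w)$ is constant. Since $|N^*|\le 2d<p$, this forces $\mathcal{C}_\rho$ to be constant. Through Remark~\ref{rem:C-C} and $I_u\cap I_w\neq\emptyset$, this gives a set $\mathcal{R}$ of more than $n+(2+d)k'$ vertices in which every $w$ sees a single colour $C_w$ on all but $k'$ of its edges. A double count shows all but $2k'$ of these vertices share $C_w=g$, and Lemma~\ref{lem:mono} then embeds $G$ in colour $g$. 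None of this structure is reachable from a fixed $f$ and local swaps.
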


The proof of \Cref{lem:embedding} will follow in \Cref{sec:lemma}. Note that the modification to $f$ we mentioned in the proof overview is realized by condition \ref{item2} and the choice of defining $f(v_i)=h(v_i)$ or $f(v_i)=h'(v_i)$ for any given $v_i\in J$. Further, in the proof overview we said that we need $|J|\geq p-1$. However, as it turns out, this would not give us enough flexibility and it is necessary to occasionally allow for some $v\in J$ not to provide a choice for a modification of $f$. Hence, we require $|J|\geq 2p$.

\subsection{Proof of Theorem \ref{mainT}}
We now prove \Cref{mainT} using \Cref{blueprint_extraction}, \Cref{lem:embedding}, and \Cref{CD}. 

\begin{proof}[Proof of \Cref{mainT}]

Let $R$ be a vertex set of size at least $n+(3+3d)p$ and let $c:E(R)\to \ZZ_p$ be an edge-coloring. We begin by applying 
\Cref{blueprint_extraction} to $G$ to get $J=\{v_1,...,v_s\}\subset V(G)$, an independent set of size $s\coloneqq 2p$ where each vertex has degree at most $2d$ but degree at least $1$. For each $v_i\in J$ denote $N_i\coloneqq N(v_i)$. We can then define $$U\coloneqq \bigcup_{i\in [s]}N_i  \quad \text{ and } \quad Z\coloneqq V(G)\setminus (J\cup U)$$ noting that $(U\cup Z)\cap J=\emptyset$. Then we apply \Cref{lem:embedding} to either get the embedding functions $f:U\cup Z\to R$ and $h,h':J\to R$, or a monochromatic copy of $G$ in $R$. In the latter case, the monochromatic copy of $G$ is a witness to a zero-sum embedding of $G$ because $p$ divides $m$. Then let us consider the former case.

Consider the set of sequences $\overline u=(u_i)_{i\in [s]}$ where each $u_i\in \{h(v_i),h'(v_i)\}$. For each such $\overline{u}$  let us denote $c(\overline u)\coloneqq \sum_{i\in [s]}c(u_if(N_i))$. Applying \Cref{CD} with $A_i\coloneqq \{c(h(v_i)f(N_i)),c(h'(v_i)f(N_i))\}$ we get $$p=\min\{p,s+p-s+1\}\leq \left|\sum_{i\in [s]} A_i\right|= |\{c(\overline u):\overline u\in \{h(v_1),h'(v_1)\}\times...\times \{h(v_s),h'(v_s)\}\}|$$ by condition \ref{item2} of \Cref{lem:embedding}.
Thus, for each $g\in \mathbb{Z}_p$ there is some sequence $\overline g \in \{h(v_1),h'(v_1)\}\times...\times \{h(v_s),h'(v_s)\}$ such that $c(\overline g)=g$. Next define $$z\coloneqq \sum_{\substack{x,y\in U\cup Z\\ xy\in E(G)}}-c(f(x)f(y))$$ and consider the corresponding sequence $\overline z = (z_i)_{i\in [s]}$ for which $c(\overline{z})=z$. We can then extend $f$ to $J$ with $f(v_i)\coloneqq z_i$. Note that $f$ remains injective because of conditions \ref{item1a} and \ref{item1b} of \Cref{lem:embedding} and the fact that $h$ and $h'$ are injections themselves. Further, $$\sum_{xy\in E(G)}c(f(x)f(y))=\sum_{i\in [s]}c(z_if(N_i))+\sum_{\substack{x,y\in U\cup Z\\ xy\in E(G)}}c(f(x)f(y))=z-z=0.$$
Thus, $f$ is a zero-sum embedding of $G$ into $R$.
\end{proof}

\section{Proof of Lemma \ref{lem:embedding}}\label{sec:lemma}

In the rest of this paper we prove \Cref{lem:embedding} thereby concluding our proof of \Cref{mainT}. The proof of \Cref{lem:embedding} can be broken down as follows. First we try to define $f$, $h$, and $h'$ into $R$. We will do this by embedding one element of $U$ at a time and embedding elements $v_i\in J$ (i.e. defining $h$ and $h'$ for $v_i$) only when the last element of $N_i$ is about to be embedded. If this process fails, then the set of unused vertices in $R$, which we will call $R(\tau)$, will exhibit some strong properties under the coloring $c$. Notably, we will have a set of $k\leq 2p$ vertex colorings such that each vertex will be strongly regular in at least $k/2$ of the colorings (for a precise definition of what we mean by strongly regular see \Cref{rem:C-C}). Keeping this property in mind, we will again try to define $f$, $h$, and $h'$ into $R(\tau)$. This time we will do this by embedding all of $U$ from the start into a set of vertices that are all strongly regular in some vertex coloring $\cC_\rho$. Then we will define $h$ and $h'$ one element of $J$ at a time. If this fails, we will be able to immediately argue that on the remaining vertices, $c$ is almost monochromatic (meaning that each vertex sees at most $k/2$ edges not in the majority color).
We conclude with an application of \Cref{lem:mono} which finds a monochromatic embedding of $G$ into $R(\tau)$ using $G$'s $d$-degeneracy property. 

\subsection{Embedding a $d$-degenerate graph}\label{sec:d-d}

Although we will only use the following lemma at the very end, we will present it now because it is self-contained and the second and last time we will use the $d$-degeneracy of $G$ (the first being \Cref{blueprint_extraction} where we found a sufficiently large independent set of non-isolated low-degree vertices). This lemma will allow us to later embed $G$ monochromatically in the case that the edge-coloring $c$ has some sufficiently high density in one color on some sufficiently large subset of vertices $\cR\subset R$.

\begin{lemma}\label{lem:mono}
    Let $G$ be a $d$-degenerate graph on $n$ vertices, and $H$ a graph such that for any collection of $d$ vertices $Y\subset V(H)$, their joint neighborhood $$N(Y)\coloneqq \bigcap_{y\in Y} N(y)$$ satisfies 
    $|N(Y)|\geq n-|Y|.$ Then there exists a subgraph $H'\subset H$ that is isomorphic to $G$.
\end{lemma}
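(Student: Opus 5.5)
We embed $G$ greedily along its degeneracy order, processed in reverse. By definition of $d$-degeneracy, order $V(G)=\{u_1,\dots,u_n\}$ so that each $u_i$ has at most $d$ neighbours $u_j$ with $j>i$. We embed the vertices in the order $u_n,u_{n-1},\dots,u_1$. At the moment $u_i$ is embedded, its already-embedded neighbours are exactly its ``forward'' neighbours $u_j$ ($j>i$), of which there are at most $d$. We maintain an injection $\phi$ from $\{u_{i+1},\dots,u_n\}$ into $V(H)$ such that every edge of $G$ among these vertices is mapped to an edge of $H$. The task is to extend $\phi$ to $u_i$.

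For the extension step, let $Y_i=\phi(\{u_j: j>i,\ u_ju_i\in E(G)\})$, so $|Y_i|\le d$. We need a vertex of $N(Y_i)\setminus \phi(\{u_{i+1},\dots,u_n\})$. The hypothesis is stated for sets of exactly $d$ vertices, so if $|Y_i|<d$ we pad $Y_i$ to a set $Y$ of exactly $d$ vertices. We choose the padding vertices among the already-used images $\phi(\{u_{i+1},\dots,u_n\})$ when possible, and otherwise arbitrarily. Since $N(Y)\subseteq N(Y_i)$, it suffices to find an unused vertex in $N(Y)$. If $Y_i=\emptyset$, any unused vertex works, and one exists because $|V(H)|\ge |N(Y)|+|Y|\ge n$ for any $d$-set $Y$ (here we note $N(Y)\cap Y=\emptyset$).

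The key counting step handles the fact that the used vertices number $n-i$, while the guarantee is only $|N(Y)|\ge n-|Y|=n-d$. The bound would be insufficient if all used vertices could lie in $N(Y)$. The point is that the vertices of $Y$ themselves are used and are not in $N(Y)$, since a vertex is not its own neighbour. So at most $(n-i)-|Y\cap \mathrm{used}|$ used vertices lie in $N(Y)$. If we manage to choose all of $Y$ among used vertices, which is possible when $n-i\ge d$, then at most $n-i-d$ used vertices lie in $N(Y)$. This leaves at least $(n-d)-(n-i-d)=i\ge 1$ free vertices in $N(Y)$. When $n-i<d$, $Y$ contains all $n-i$ used vertices plus some unused padding. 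Then no used vertex lies in $N(Y)$, and $|N(Y)|\ge n-d\ge 1$, assuming $n>d$; the case $n\le d$ is trivial or handled similarly by a direct count.

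We map $u_i$ to any such free vertex of $N(Y)$. Its images are adjacent to all of $Y_i$, so all edges from $u_i$ to its forward neighbours are preserved, and injectivity is maintained. Iterating down to $u_1$ gives the required copy $H'$ of $G$.

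The main obstacle is precisely this bookkeeping. The naive count ($n-d$ candidates versus up to $n-1$ used vertices) fails, and the proof works only because we pad $Y$ with used vertices and use that $Y\cap N(Y)=\emptyset$. Done this way, each used vertex in $Y$ is excluded from $N(Y)$, which recovers exactly the slack needed.
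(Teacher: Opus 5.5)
Your proposal is essentially the paper's proof: a greedy embedding along the reversed degeneracy order, placing $u_i$ in the common neighbourhood of its already-embedded forward neighbours minus the used vertices. Your counting is more careful than the paper's, which simply asserts $|N(Y_i)|\ge n$ even though the hypothesis only gives $n-|Y|$; padding $Y$ with used vertices and using $Y\cap N(Y)=\emptyset$ is exactly what recovers the missing $|Y|$.

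The one slip is calling the case $n\le d$ trivial. The statement is false there: take $d=2$, $G=K_2$ and $H$ edgeless, which satisfies the hypothesis trivially. It is also false when $|V(H)|<d$, where the hypothesis is vacuous. So these cases must be excluded by assumption rather than dismissed. This is harmless, since the paper applies the lemma with $n\ge 2(d+1)^2p$ and $|V(H)|\ge n$.
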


\begin{remark}
    When $d=1$ this lemma is just the fact that any forest on $n$ vertices can be embedded into a graph with minimum degree $n-1$.
\end{remark}

\begin{proof}
$G$ is $d$-degenerate, so there is an ordering of vertices $v_1,...,v_n$ such that for all $j\in [n]$, $Y_j\coloneqq \{v_\ell \sim v_j:\ell>j\}$ has size at most $d$. So, starting with $i=n$, set $f:\emptyset \to \emptyset$ and working backwards through to $i=1$, let us embed each $v_i$ into some new $f(v_i)=w_i\in N(Y_i)\setminus f(\{v_{i+1},...,v_n\})\neq \emptyset$ (recalling that $|N(Y_i)|\geq n$). The result of this process is an embedding of $G$ into $H$ thereby concluding the proof. 
\end{proof}

\subsection{Embedding into $R$}\label{sec:intoR}

We will now prove \Cref{lem:embedding} by defining two processes that attempt to construct our desired $f$, $h$, and $h'$. If both of these processes fail, then we will be able to conclude via \Cref{lem:mono}. 

\begin{proof}[Proof of Lemma \ref{lem:embedding}]

Recall that $J=\{v_1,...,v_{s}\}\subset V(G)$ is an independent set where each $v_i\in J$ has $1\leq d_G(v_i)\leq 2d$. Also recall that $|R|\geq n+(3+3d)p$ and $c:E(R)\to \ZZ_p$ is an edge coloring. Let us also  define $$U\coloneqq \bigcup_{i\in [s]}N_i  \quad \text{ and } \quad Z\coloneqq V(G)\setminus (J\cup U)$$ as in the proof of \Cref{mainT}. 
We start with empty functions $f,h,h':\emptyset \to \emptyset$ and grow the domain of $f$ to $U\cup Z$ and the domains of $h$ and $h'$ to $J$. We order the elements of $U=\{u_1,...,u_t\}$ in descending order by their degree into $J$. That is, for each $0<j<j'\leq t$, $d(u_j,J)\geq d(u_{j'},J)$. Further, for each $i\in [s]$ we define $\ell_i$ to be such that $u_{\ell_i}$ is the last element in $N_i$ (under the ordering on $U$). Using the definition of $\ell_i$ for each $j\in [t]$ we define $$\omega(j)\coloneqq \{v_i\in J:\ell_i=j\}.$$

Our process will work by embedding each $u_j$ in order, and at each $j$ where $\omega(j)\neq \emptyset$ we will try to define $h$ and $h'$ on the vertices $v_i\in \omega(j)$. However, it is key that we will not require 
\begin{equation}\label{eq:ineq}
c(h(v_i)f(N_{i}))\neq c(h'(v_i)f(N_i))
\end{equation}
\textbf{for each} $v_i\in \omega(j)$. Instead, we only require that at least half of the $v_i\in \omega(j)$ satisfy \cref{eq:ineq}. Recall that we only need for $p$ of the $v_i$ across all $i\in [s]$ to satisfy \cref{eq:ineq}. Then we want to ensure that as soon as this condition is met we stop the process in order to use the vertices in $R$ efficiently. Likewise, for any $v_i$ that does not satisfy \cref{eq:ineq}, we will ensure that $h(v_i)=h'(v_i)$. To this end, we take $t'\leq t$ to be smallest integer for which  $$\psi(t')\coloneqq\sum_{j=1}^{t'}\left\lceil\frac{|\omega(j)|}{2}\right\rceil\geq p.$$ Note that $t'$ exists because $\psi(t)\geq 2p/2 =p$. Then for each $j\in[t'-1]$ let $k_j\coloneqq \lceil |\omega(j)|/2\rceil$ and set $k_{t'}\coloneqq p-\psi(t'-1)$. Observe that $\sum_{j\in[t']}k_j = p$. Finally, let us define the subset of $J$ which we will try to embed in our process: $$J'=\bigcup_{j\in [t']}\omega(j).$$ With these notions defined, we can describe the embedding process:

First, let us set $R(1)=R$. Next, we will visit the vertices $u_1,u_2,...,u_{t'}$ in order, and at each step $j$, do the following:
\begin{enumerate}
    \item if there is no $i\leq s'$ such that $u_j\in N_i$ then update $Z\gets Z\cup \{u_j\} $ and $R(j+1)\coloneqq R(j)$. 
    \item Otherwise, if $\omega(j)=\emptyset$ we take any vertex $u_j^*\in R(j)$, define $f(u_j)=u_j^*$, and set $R({j+1})\coloneqq R(j)\setminus \{u_j^*\}$.
    \item If $\omega(j)=\{a_1,...,a_{|\omega(j)|}\}\neq \emptyset$, temporarily set $k=|\omega(j)|$ and find some $u_j^*,w_1,w_1',...,w_k,w_k'\in R(j)$ such that:
    
    \begin{enumerate}
        \item $\{u_j^*,w_{i_1},w_{i_1}'\}\cap \{w_{i_2},w'_{i_2}\}=\emptyset$ for all $i_1\neq i_2\in [k]$,
        \item $w_i=w_i'$ for $k-k_j$ values of $i\in [k]$ 
        \item and $c(u^*_jw_i) + c(w_if(N_{a_i}\setminus \{u_j\})) \neq c(u^*_jw'_i)+c(w_i'f(N_{a_i}\setminus \{u_j\}))$ for $k_j$ values of $i\in [k]$.
    \end{enumerate}
    If this is not possible we set $\tau\coloneqq j$, and terminate the process.
    Otherwise, define $f(u_j)=u^*_j$, and for all $i\in [k]$ define $h(a_i)=w_i$ and $h'(a_i)=w'_i$. Lastly, set $R({j+1})\coloneqq R({j})\setminus \{u^*_j,w_1,w_1',...,w_k,w_k'\}.$
\end{enumerate}
Finally, after step $t'$, we take any $Z^*\subset R(t'+1)$ of size $|Z|+(|U|-t')$ and extend $f$ bijectively $$f:Z\cup (U\setminus \{u_1,...,u_{t'}\})\to  Z^*.$$ Also take any subset $J^*\subset R({t'+1})\setminus Z^*$ of size $s-s'$ and extend $h$ and $h'$ bijectively (and identically) $$h,h':J\setminus\{v_1,...,v_{s'}\}\to J^*.$$

If the process finishes successfully we will have injective mappings $f:U\cup Z\to R$, $h:J\to R$, and $h':J\to R$. Naturally, the process ensures that $f(U\cup Z)\cap(h(J)\cup h'(J))=\emptyset$ and $h(v)\neq h'(v')$ for all $v\neq v'\in J$. Furthermore, by construction, we have that $$p= \sum_{j\in[t']} k_j=|\{i\in [s']:c(h(v_i)f(N_{i}))\neq c(h'(v_{i})f(N_{i}))\}|$$ which gives us the desired result.

If, on the other hand, the process was terminated prematurely, it is only because for some $\tau \leq t'$, $\omega(\tau )=\{v_{a_1},...,v_{a_k}\}$ and we were not able to find appropriate $u_{\tau }^*$ and $w_1,w_1',...,w_k,w_k'$ in $R(\tau)$. In this case, for each $i\in [k]$ and any $w\in R(\tau)$, let us define the vertex coloring $$\cC_i(w)\coloneqq c(wf(N_{a_i}\setminus \{u_{\tau }\}))$$ so that for all $u, w_1,w_1',...,w_k,w_k'\in R(\tau)$ (where $u$ is distinct from each $w_i$ and $w_i'$) there is some $I\subset [k]$ of size $|I|>k-k_\tau$ such that $c(uw_i)+\cC_i(w_i)=c(uw_i')+\cC_i(w_i')$ for all $i\in I$. Then set $k'\coloneqq k_{\tau }-1$. 

\begin{claim}$$|R(\tau)|\geq n+ p +(2+3d)k'$$
\end{claim}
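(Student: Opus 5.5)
The plan is to count exactly how many vertices of $R$ the process has consumed before step $\tau$, and to show that this number is at most $|R|-(n+p+(2+3d)k')$. Since $R(\tau)=R\setminus\{\text{vertices used in steps }1,\dots,\tau-1\}$ and $|R|\ge n+(3+3d)p$, it suffices to show
\[
\#\{\text{vertices removed before step }\tau\}\le (2+3d)p-(2+3d)k' = (2+3d)(p-k').
\]
Note that $p-k'=p-k_\tau+1\ge 1+\sum_{j<\tau}k_j$, because $\sum_{j\in[t']}k_j=p$ and $\tau\le t'$. So the right-hand side is at least $(2+3d)\bigl(1+\sum_{j<\tau}k_j\bigr)$.

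\textbf{Step 1: count per step.} I would go through the three cases of a step $j<\tau$.
\begin{itemize}
\item[(1)] If $u_j$ is not adjacent to any vertex of $J'$, no vertex of $R$ is used.
\item[(2)] If $\omega(j)=\emptyset$, exactly one vertex $u_j^*$ is used.
\item[(3)] If $\omega(j)\neq\emptyset$, we use $u_j^*$, two distinct vertices $w_i\ne w_i'$ for $k_j$ indices, and a single vertex $w_i=w_i'$ for the other $|\omega(j)|-k_j$ indices. That is $1+|\omega(j)|+k_j$ vertices in total.
\end{itemize}
For $j<\tau\le t'$ we have $k_j=\lceil|\omega(j)|/2\rceil$, so $|\omega(j)|\le 2k_j$. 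Hence the $J$-part of the consumption before $\tau$ is at most $3\sum_{j<\tau}k_j\le 3(p-k'-1)$.

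\textbf{Step 2: count the embedded vertices of $U$.} The remaining consumption is one vertex for each $u_j$ with $j<\tau$ that is adjacent to $J'$. I must show there are at most $3d(p-k')+2(p-k')-3(p-k'-1)$ of them, i.e.\ roughly $(3d-1)(p-k')+3$. Each such $u_j$ lies in some $N_i$ with $v_i\in J'$. I would split these $v_i$ into two groups:
\begin{itemize}
\item[(a)] those already completed, $v_i\in\bigcup_{j<\tau}\omega(j)$. There are at most $\sum_{j<\tau}|\omega(j)|\le 2\sum_{j<\tau}k_j$ of them, and each contributes at most $2d$ neighbours.
\item[(b)] those with $\ell_i\ge\tau$.
\end{itemize}
To control group (b), I would use the descending-degree ordering of $U$. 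Every $u_j$ with $j<\tau$ has $d(u_j,J)\ge d(u_\tau,J)\ge |\omega(\tau)|$-type lower bounds. So a $v_i$ that is still unfinished at time $\tau$ must have a neighbour (namely $u_{\ell_i}$) of degree at most that of $u_\tau$. The goal is to charge the early $u_j$'s to the $2d$-bounded neighbourhoods of completed vertices together with $\omega(\tau)$. The vertices of $\omega(\tau)$ number $k\le 2k_\tau=2k'+2$, which absorbs the $+3$ slack.

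\textbf{Main obstacle.} I expect this second counting step to be the crux: bounding the number of $u_j$, $j<\tau$, that consume a vertex in terms of $p-k'$ rather than $p$. The $J$-part is routine. The $U$-part needs the degree ordering and the bound $d_G(v_i)\le 2d$ from \Cref{blueprint_extraction}, used so that each unit of $k_j$ pays for at most $3d$ embedded neighbours. If the clean charging fails, the fallback is the cruder bound "at most $2d$ neighbours per vertex of $J'$ touched." This should still fit inside the budget $(2+3d)(p-k')$ after using $|\omega(j)|\le 2k_j$ and the slack term $p$ that is already reserved in the claim.
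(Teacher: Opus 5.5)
Your reduction is correct: it suffices to show that at most $(2+3d)(p-k')$ vertices are removed before step $\tau$. Your bound $\sum_{j<\tau}(|\omega(j)|+k_j)\le 3\sum_{j<\tau}k_j\le 3(p-k'-1)$ for the $J$-part is also fine, and slightly sharper than the paper's $(2p-k)+(p-k')$. But the whole content of the claim is the bound on $|Y|$, the number of $u_j$ with $j<\tau$ that get embedded, and you do not prove it. Neither of your suggested routes works.

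\begin{itemize}
\item The charging to completed vertices and $\omega(\tau)$ leaves out your group (b). A $v_i\in J'$ with $\ell_i>\tau$ can have up to $2d-1$ neighbours among $u_1,\dots,u_{\tau-1}$. All of these lie in $Y$ and are charged to nothing.
\item The fallback ``$2d$ neighbours per touched vertex of $J'$'' is numerically too weak. $|J'|$ can be close to $2p$ (e.g.\ all $|\omega(j)|=2$, which forces $k'=0$). This gives only $|Y|\le 4dp$, and a total consumption estimate of $4dp+3(p-1)$, which exceeds $(2+3d)p$ as soon as $(d+1)p>3$.
\item Even group (a) alone costs up to $4d$ per unit of $k_j$ under this count, against the roughly $3d-1$ per unit left after the $J$-part.
\item The ``slack term $p$'' cannot help. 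It is part of the statement and was already subtracted when you derived the budget $(2+3d)(p-k')$.
\end{itemize}

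The missing idea is a quantitative use of the descending-degree order, applied differently according to $r=d(u_\tau,J)$.

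\emph{Case $r\ge 2$.} Every $u_j$ with $j\le\tau$ has $d(u_j,J)\ge r$, while $\sum_{u\in U}d(u,J)=\sum_i|N_i|\le 4dp$. Hence $|Y|<\tau\le 4dp/r$. At the same time $k\le r$ forces $k'\le\lceil r/2\rceil-1$, so a large $k'$ comes with few embedded $u_j$. One then checks that $1+(3d-1)(p-k')-4dp/r>0$ for $2\le r\le 2p$.

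\emph{Case $r=1$.} Here $k'=0$ and the budget is all of $(2+3d)p$. Let $Z_1=\{i: v_i\in J',\ |\omega(\ell_i)|=1\}$, which contains all of your group (b), and $Z_2=\{i: v_i\in J',\ |\omega(\ell_i)|\ge 2\}$. The definition of $t'$ (here $\psi(t')=p$) gives $|Z_1|+|Z_2|/2\le p$. Every neighbour of $v_i$ with $i\in Z_2$ is $u_{\ell_i}$ or precedes it in the order. Hence it has $J$-degree at least $|\omega(\ell_i)|\ge 2$. Double counting incidences then gives
\begin{itemize}
\item $|Y|\le\sum_{i\in Z_1}|N_i|+\tfrac12\sum_{i\in[s]\setminus Z_1}|N_i|\le 2dp+d|Z_1|$,
\item and therefore $|Y|+|Z_1|+|Z_2|+p\le(2+3d)p$.
\end{itemize}

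Without this halving from forced shared neighbours, and the $4dp/r$ bound in the other case, your outline does not yield the claim.
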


\begin{subproof}[Proof of claim]

Let us denote $r\coloneqq d(u_\tau,J)$. First, suppose that $r>1$. We recall that the vertices of $U$ are ordered decreasingly by their degree into $J$ and so $k=|\omega(\tau)|\leq r\leq d(u_j,J)$ for all $j\leq \tau$. Letting $Y$ be the set of $j\in [\tau-1]$ such that $u_j\in N_i$ for some $i\leq s'$, we get that $|Y|<\tau \leq  4dp/r$. Furthermore, we obtained $R(\tau)$ by removing from $R$ the $|Y|$ vertices we used for embedding $\{u_j:j\in Y\}$, by removing the image of $h$, and by removing the image of $h'$ (this removes at most $p-k'$ more vertices). Thus we get:

\begin{align}
    |R(\tau)|- (n+ p +(2+3d)k')&\geq |R|-|Y|-(2p-k)-(p-k')-(n+p+(2+3d)k')\\&\geq 1+(-1+3d)(p-k')-4dp/r.\label{R_tau_small}  
\end{align} Recalling that $k'\leq \lceil k/2\rceil-1\leq \lceil r/2\rceil-1$, we may assume that $k'= \lceil r/2\rceil-1$. Further, the function $\zeta(x) = 3\lceil x/2\rceil+4p/x$ has the property that for $x$ an integer greater than $2$, $\zeta(x)\leq  (\zeta(x+2)+\zeta(x-2))/2$. Thus, expression \ref{R_tau_small} is minimized when $r=2$, $r=3$, $r=2p$, or $r=2p-1$. When $r= 2$ we have $k'=0$ and so the expression becomes $1+(-1+3d)p-2dp>  0$ as desired. If $r=3$ we instead get $1+(-1+3d)(p-1)-4dp/3>  0$. On the other hand, when $r\in \{2p-1,2p\}$, $k'=p-1$ and so the expression becomes $1+(-1+3d)-4dp/r\geq 3d-4dp/(2p-1)\geq d-2d/(2p-1)> 0$ as desired.

Now instead suppose that $r=1$. Let $Z_1$ be the set of $i\leq s'$ such that $|\omega(\ell_i)|=1$, let $Z_2$ be the set of $i$ such that $|\omega(\ell_i)|\geq 2$ (since $d(u_\tau,J)=1$ we have that all such $i$ will be less than $s'$), and let $Z'_1\coloneqq [s]\setminus (Z_1\cup Z_2)$ be the remaining $i$. Then we have that
\begin{equation}\label{eq:simple_sum}
|Z'_1|+|Z_1|+|Z_2|=s=2p.
\end{equation}
Observe that by the definition of $\psi(t')$ and the fact that $ d(u_{t'},J)\leq d(u_\tau,J)=1$,
\begin{equation}\label{eq:psi_sum}
p=\psi(t')=\sum_{j=1}^{t'} \textbf{1}(|\omega(j)|=1) +(\lceil|\omega(j)|/2\rceil)\cdot\textbf{1}(|\omega(j)|>1)\geq |Z_1|+\sum_{j=1}^{t'} \textbf{1}(|\omega(j)|>1)\geq   
|Z_1|+|Z_2|/2
\end{equation}
since for each $j$ such that $|\omega(j)|>1$ we have at least two corresponding $i,i'\in Z_2$ such that $j=\ell_i=\ell_{i'}$. Further, for each $j\in Y$ we have for some $i\leq s'$, $u_j\in N_i$ and either $\omega(\ell_i)=1$ or  $\omega(\ell_i)\geq 2$. In the latter case we know that there is some $i'\neq i$ such that $u_j\in N_{i'}$. Then this case can be further broken down into two cases: either $i'\leq s'$ or $i'> s'$. Let $X$ be the subset of $j\in Y$ that fall into the case where $i'\leq s'$. Then $|Y| \leq \sum_{i\in Z_1} |N_i| +\sum_{i\in Z_2}|N_i| - |X|$.
 We can also see that $|X|\geq \left(\sum_{i\in Z_2}|N_i|-\sum_{i\in Z'_1}|N_i|\right)/2$. Then recalling that for all $i\in[s]$, $|N_i|\leq 2d$, by \cref{eq:simple_sum} we get that $$|Y|\leq 
\sum_{i\in Z_1} |N_i|+ \left(\sum_{i\in Z_2}|N_i|+\sum_{i\in Z'_1}|N_i|\right)/2\leq d(2|Z_1|+|Z_2|+|Z_1'|) \leq 2dp+d|Z_1|.$$ Finally, by \cref{eq:psi_sum} we get
\begin{align*}
   |R(\tau)|-(n+p)&\geq |R|-|Y|-p-|Z_1|-|Z_2|-(n+p)=p+3dp-|Y|-|Z_1|-|Z_2|
   \\ &\geq (d+1)p-(d+1)|Z_1|-|Z_2|\geq (d-1)p-(d-1)|Z_1|\geq 0. 
\end{align*}

\end{subproof}

Now let us analyze $c$ on this set $R(\tau)$. For each $u\in R(\tau)$ take some maximal $I_u\subset [k]$ such that there is a vertex set $L_u\subset R(\tau)$ of size $k_\tau$ such that for each $i\in I_u$, $c(wu)+\cC_{i}(w)$ is constant across all $w\in R(\tau)\setminus L_u$. By convention, $L_u$ always contains $u$ itself. Further, for any $u\in R(\tau)$ define $R_u\coloneqq R(\tau)\setminus L_u$ and note that $|R_u|= |R(\tau)|-k_\tau$. 

\begin{remark}\label{rem:C-C}
    Fixing $i,j\in I_u$ we observe that for all $w,w'\in R_u$, $$c(uw)+\cC_i(w)-c(uw')-\cC_i(w')=0=c(uw)+\cC_j(w)-c(uw')-\cC_j(w')$$ and so $\cC_i(w)-\cC_i(w')=\cC_j(w)-\cC_j(w')$.
\end{remark}

\begin{claim}
    $|I_u|> k/2$.
\end{claim}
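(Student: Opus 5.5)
The plan is to fix $u\in R(\tau)$ and study the $k$ functions $\varphi_i(w)\coloneqq c(uw)+\cC_i(w)$ for $w\in R(\tau)\setminus\{u\}$, $i\in[k]$. The only information we have is the failure of step 3 at time $\tau$: for every choice of distinct $w_1,w_1',\dots,w_k,w_k'$ (all different from $u$), more than $k-k_\tau$ indices satisfy $\varphi_i(w_i)=\varphi_i(w_i')$. Equivalently, we can never exhibit $k_\tau$ indices $i$ together with pairwise disjoint pairs $\{w_i,w_i'\}\subset R(\tau)\setminus\{u\}$ such that $\varphi_i(w_i)\neq\varphi_i(w_i')$. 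Indeed, given such a configuration we would set $w_i=w_i'$, using fresh vertices, for the remaining $k-k_\tau$ indices; this is condition (b). That would produce a valid choice with $u_\tau^*=u$, contradicting termination. The goal is to turn this ``no $k_\tau$ disjoint witnesses'' statement into many indices on which $\varphi_i$ is constant off a common small set.

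For each $i$, let $\mu_i$ be the most frequent value of $\varphi_i$ on $R(\tau)\setminus\{u\}$, and call $w$ \emph{minority for $i$} if $\varphi_i(w)\neq\mu_i$. I would run a greedy procedure. While there is an unselected index $i$ and a vertex $x\neq u$, not yet used, that is minority for $i$, select $i$ and use the pair $(x,y)$. Here $y$ is a fresh vertex chosen to be majority simultaneously for \emph{every} index that has few minority vertices. Each selected pair is a witness, so the procedure stops after at most $k_\tau-1$ selections. The chosen minority vertices $x$, together with $u$, form a set of size at most $k_\tau$; padding it arbitrarily gives a set $L_u$ of size exactly $k_\tau$ with $u\in L_u$.

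When the procedure stops, every unselected index $i$ has no minority vertex outside the used set. The $y$'s were chosen to be majority for $i$, so every minority vertex of $i$ lies in $L_u$. Hence $\varphi_i$ is constant on $R(\tau)\setminus L_u$, which is exactly the property defining $I_u$. The number of unselected indices is at least $k-(k_\tau-1)$. Since $k_\tau\le\lceil k/2\rceil$ (by the definitions of $k_j$ and $k_{t'}$), we get $k-k_\tau+1\ge k-\lceil k/2\rceil+1>k/2$. By maximality of $I_u$ this gives $|I_u|>k/2$.

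The main obstacle is choosing the partner vertices $y$. They must be majority for all indices that end up unselected, and also distinct from everything used. A poor choice of $y$ could create a new minority vertex inside the used set, which would enlarge the exceptional set beyond $k_\tau$. I would first argue the following. If some index has at least $2k_\tau$ minority vertices, then we can greedily find disjoint witness pairs for it while avoiding the at most $2k_\tau$ vertices already used, so it is harmless to select it. Consequently, every index still in play has fewer than $2k_\tau$ minority vertices. The union of their minority sets therefore has size less than $2kk_\tau\le 4p^2$; if that is too crude, I would bound it by $O(p)$ using $k_\tau\le p$ and the fact that only the unselected indices matter. I would then use the Claim $|R(\tau)|\ge n+p+(2+3d)k'$ to show that a common majority vertex $y$ outside this union and outside the used set always exists. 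If this counting turns out to be too tight, the fallback is to pick $y$ among majority vertices only for the index $i$ currently being selected. I would then add the used $y$'s to the exceptional set and recheck that the selected pairs can be arranged so that the $x$'s alone still witness at most $k_\tau-1$ indices.
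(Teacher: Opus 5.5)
Your overall plan matches the paper's proof: greedily match indices to distinct minority vertices $x$, show that at most $k_\tau-1$ indices get matched, put $u$ and the matched $x$'s into $L_u$, and note that every unmatched index has $\varphi_i$ constant off $L_u$, so $|I_u|\ge k-k_\tau+1>k/2$.

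\textbf{The gap.} The problem is the step you flag yourself. You fix the partner $y$ of each pair \emph{during} the greedy and count it as used. This forces you to find a $y$ that is a majority vertex for every index that might end up unselected, and you never show such a $y$ exists. Your bound $2kk_\tau\le 4p^2$ does not fit inside $R(\tau)$. One can have $k=2p$ and $k_\tau=p$, e.g.\ when all of $J$ has the same last neighbour $u_\tau$. Meanwhile $|R(\tau)|\ge n+p+(2+3d)k'$ is only guaranteed to be of order $p(d+1)^2$, since $n$ may be as small as about $2p(d+1)^2$. The promised $O(p)$ improvement does not follow from the failure condition at $u$, which is all your argument uses. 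That condition allows $k_\tau-1$ non-constant indices whose minority sets each have fewer than $2k_\tau$ elements yet together cover $R(\tau)\setminus\{u\}$ (possible once $p$ is large compared with $(d+1)^2$). Such a configuration satisfies the failure condition, because only $k_\tau-1$ indices then admit any witness pair. In it, no common majority vertex exists. Your fallback of adding the $y$'s to the exceptional set gives a set of size up to $2k_\tau-1$. But the definition of $I_u$ requires $|L_u|=k_\tau$, and that size is used later through $|R_u|=|R(\tau)|-k_\tau$.

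\textbf{The fix.} Postpone choosing partners, as the paper does.
Run the greedy with only the $x$'s counted as used: for $i=1,\dots,k$, if $i$ has a minority vertex not among the previously chosen $x$'s, select $i$ and one such $x_i$. At the end, every unselected index has all its minority vertices among the $x$'s. So if fewer than $k_\tau$ indices were selected, you finish exactly as you describe.
If at least $k_\tau$ were selected, only now choose partners. For each selected $i$, take any $w_i$ with $\varphi_i(w_i)\ne\varphi_i(x_i)$. Since $\varphi_i(x_i)$ is not the most frequent value of $\varphi_i$, at least $(|R(\tau)|-1)/2>2k$ vertices qualify. So the $w_i$ can be chosen distinct and avoiding the $x$'s, giving $k_\tau$ disjoint witness pairs and contradicting termination.
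Do not require the partner to be a majority vertex. The majority class of $\varphi_i$ is only guaranteed to have about $|R(\tau)|/p$ elements, which need not exceed $2k$.
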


\begin{subproof}[Proof of claim]
For each $i\in [k]$ let $\gamma(i)$ be the most common value of $c(wu)+\cC_{i}(w)$ across all $w\in R(\tau)\setminus\{u\}$. Also, for each $g\in \mathbb{Z}_p$ let $$\alpha(i,g)\coloneqq\{w\in R(\tau)\setminus\{u\}: c(wu)+\cC_{i}(w)\neq g \}$$ be the set of vertices that do not attain $g$. Starting with $i=1$ and $Q_1=I_1=\emptyset$ and working up to $i=k$, at each step let us consider $\alpha(i,\gamma(i))\setminus Q_i$. If it is non-empty choose one of its elements $w'_{i}$, and set $Q_{i+1}=Q_i \cup \{w'_i\}$ and $I_{i+1}=I_i$. If it is empty then instead we set $Q_{i+1}=Q_i$ and $I_{i+1}=I_i\cup \{i\}$. 

After we've finished step $i=k$ we have $|Q_{k+1}|+|I_{k+1}| = k$. If $|Q_{k+1}|< k_\tau$ set $L_u=Q_{k+1}\cup \{u\}$ and $I_u=I_{k+1}$ and observe that for each $i\in I_{k+1}$, $\alpha(i,\gamma(i))\setminus L_u$ is empty. That is, $c(wu)+\cC_{i}(w)$ is constant on $R(\tau)\setminus L_u$. Then, since $|I_{k+1}|=k-|Q_{k+1}| > k/2$ we conclude.

On the other hand, if $|Q_{k+1}| \geq  k_\tau$ then for each $i\notin I_{k+1}$ let us define $\beta(i)\coloneqq c(w'_iu)+\cC_{i}(w'_i)$ and choose some $w_i\in \alpha(i,\beta(i))\setminus Q_{k+1}$ ensuring that all of the $w_i$ are distinct. Observe that this can be done since $\beta(i)\neq \gamma(i)$ and therefore $|\alpha(i,\beta(i))|\geq (|R(\tau)|-1)/2 > 2 k$. Then we have that for each of the  $i\in [k]\setminus I_{k+1}$, $c(w_iu)+\cC_{i}(w_i)\neq \beta(i)= c(w'_iu)+\cC_{i}(w'_i)$. However, recall that since we terminated the process early, we have that $|[k]\setminus I_{k+1}|<k_\tau$. On the other hand, $|[k]\setminus I_{k+1}|=|Q_{k+1}|\geq k_\tau$, yielding a contradiction. 
\end{subproof}

Now we will try again to find our desired embedding functions $f$, $h$, and $h'$, but this time using a different process which utilizes what we know about $R(\tau)$. If this process fails as well, then we will be able to embed $G$ monochromatically into $R(\tau)$ using \Cref{lem:mono}. The new process is much simpler: instead of embedding $U$ one element at a time, we will embed a significant portion of it all at once. Then, instead of trying to embed multiple $v_i$'s at a time, we will embed the first $p$ of them one at a time, requiring that $c(h(v_i)f(N_i))\neq c(h'(v_i)f(N_i))$. Although not strictly necessary, we will attempt this process two times in order to use the vertices of $R(\tau)$ as efficiently as possible.
To begin, let $U'=\bigcup_{i\in [p]} N_i$. For ease of notation, given a function $f':U' \to P\subset R(\tau)$, for each $i\in [p]$ define $$\overline{P_i(f')}
\coloneqq R(\tau) \setminus\left(\bigcap_{r\in f'(N_i)}R_r\right).$$

\begin{claim}\label{R_*}
    Let $P\subset R(\tau)$ be a set of size $|P|=|U'|$ such that there is some $\rho \in [k]$ such that $\rho\in I_u$ for all $u\in P$. Let $f':U'\to P$ be a bijection. Further, let $\cP=\{x_1,...,x_p\}\subset R(\tau)\setminus P$ such that $x_j\notin \overline{P_j(f')}$  for each $j\in [p]$. Then either we can find desired embedding functions $f$, $h$, and $h'$ satisfying the conditions in \Cref{lem:embedding} or there is a subset $\cR(P)\subset R(\tau)$ disjoint from $P$ and $\cP$ of size $$|\cR(P)|\geq |R(\tau)|-(2+2d)p-2dk'$$ such that for all $w\in \cR(P)$, $j\in I_w$, and $w',w''\in R_w\cap \cR(P)$, we have $\cC_j(w')=\cC_j(w'')$.
\end{claim}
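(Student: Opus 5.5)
The plan is to embed $v_1,\dots,v_p$ greedily against the fixed map $f'$, and to show that if this greedy embedding gets stuck, the obstruction forces $\cC_\rho$, and then every relevant $\cC_j$, to be constant on a large set.

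\textbf{Greedy attempt.} Set $f|_{U'}\coloneqq f'$ and $h(v_j)\coloneqq x_j$ for $j\in[p]$. Then, for $j=1,\dots,p$ in turn, look for a fresh vertex $y_j\in R(\tau)\setminus(P\cup\cP\cup\{y_1,\dots,y_{j-1}\})$ with
$$c(y_jf'(N_j))\neq c(x_jf'(N_j)),$$
and set $h'(v_j)\coloneqq y_j$.

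\textbf{If every step succeeds.} Extend $f$ bijectively to the rest of $U\cup Z$ using unused vertices of $R(\tau)$. For $i>p$ set $h(v_i)=h'(v_i)$ equal to further fresh vertices. By the Claim on $|R(\tau)|$ there is room: we use about $n+p$ vertices in total, and $|R(\tau)|\geq n+p+(2+3d)k'$. Conditions \ref{item1a} and \ref{item1b} then hold by disjointness, and condition \ref{item2} holds for $i\in[p]$.

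\textbf{If some step $j$ fails.} Then every available $y$ has $c(yf'(N_j))=c(x_jf'(N_j))\eqqcolon g_0$. Define
$$\cR(P)\coloneqq\Bigl(\bigcap_{r\in f'(N_j)}R_r\Bigr)\setminus\bigl(P\cup\cP\cup\{y_1,\dots,y_{j-1}\}\bigr).$$
This removes $|P|=|U'|\leq 2dp$ vertices, $p$ vertices of $\cP$, and fewer than $p$ of the $y$'s. It also removes $\overline{P_j(f')}=\bigcup_{r\in f'(N_j)}L_r$. Each $L_r$ has size $k_\tau=k'+1$ and contains $r\in P$, so this last removal costs at most $|N_j|k'\leq 2dk'$ new vertices. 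Hence $|\cR(P)|\geq|R(\tau)|-(2+2d)p-2dk'$. Also $\cR(P)$ is disjoint from $P$ and $\cP$, and $x_j\in\bigcap_{r}R_r$ by hypothesis.

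\textbf{Step 1: $\cC_\rho$ is constant on $\cR(P)$.} For each $r\in f'(N_j)$ we have $\rho\in I_r$. So $c(ry)+\cC_\rho(y)$ takes a constant value $\lambda_r$ for all $y\in R_r\supseteq\cR(P)$. Summing over $r\in f'(N_j)$ gives, for $y\in\cR(P)$,
$$g_0=c(yf'(N_j))=\sum_r\lambda_r-|N_j|\,\cC_\rho(y).$$
Since $1\leq|N_j|\leq 2d<p$, the integer $|N_j|$ is invertible mod $p$. Therefore $\cC_\rho(y)$ is constant on $\cR(P)$. This is exactly where the degree bounds on $J$ and the hypothesis $2d<p$ are used.

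\textbf{Step 2: transfer to arbitrary $w\in\cR(P)$ and $j'\in I_w$.} Fix one $u\in f'(N_j)$; it exists because $|N_j|\geq1$. By the Claim $|I_x|>k/2$ for all $x$, so $I_u\cap I_w\neq\emptyset$; pick $i$ in it. We have $\rho,i\in I_u$ and $\cR(P)\subseteq R_u$, so \Cref{rem:C-C} gives that $\cC_i-\cC_\rho$ is constant on $\cR(P)$. Combined with Step 1, $\cC_i$ is constant on $\cR(P)$. Now apply \Cref{rem:C-C} at $w$ with $i,j'\in I_w$: for $w',w''\in R_w\cap\cR(P)$,
$$\cC_{j'}(w')-\cC_{j'}(w'')=\cC_i(w')-\cC_i(w'')=0.$$
This is the conclusion of the claim.

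The main obstacle is this final transfer. Constancy is first obtained only for the single colouring $\cC_\rho$, and $\rho$ need not lie in $I_w$. The fix is to pass through the fixed pivot $u\in P$, using the majority property $|I_x|>k/2$ and the fact that $\cR(P)$ was cut to lie inside $R_u$. The remaining step needing care is the bookkeeping showing that $\overline{P_j(f')}$ costs only $2dk'$, rather than $2d(k'+1)$, new vertices.
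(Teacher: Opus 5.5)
Your proposal is correct and follows essentially the paper's route. You greedily pick $h'(v_j)$ against the fixed $f'$. On failure you sum the $\rho$-relations over $f'(N_j)$ and invert $|N_j|<p$ to make $\cC_\rho$ constant, then transfer to each $I_w$ through a pivot $u\in f'(N_j)$ with $I_u\cap I_w\neq\emptyset$ via the remark on $\cC_i-\cC_j$. Your bookkeeping is slightly cleaner than the written proof: you remove all of $\cP$ rather than only $\cP_*$, which the claim's disjointness from $\cP$ actually requires, and you get constancy of $\cC_\rho$ from the constants $\lambda_r$ directly, so you never need $x_j$ as a reference point.
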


\begin{subproof}[Proof of claim]
Start by defining $h(v_i)\coloneqq x_i$ for all $i\in [p]$, and the empty function $h': \emptyset \to \emptyset$. Next, for each $i\in[p]$, going in order, we find some $x_i'\in R(\tau)\setminus (P\cup \cP \cup \{x_j':j<i\})$ such that $c(x_if'(N_i))\neq c(x'_if'(N_i))$ and set $h'(v_i)=x'_i$. Once $i=p$ has been visited, we define $\cP' = \{x_j':j\in [p]\}$ and
we take any disjoint $W,Z',U''\subset R(\tau)\setminus (P\cup \cP \cup \cP')$ of sizes $p$, $|Z|$, and $|U|-|P|$ respectively. Then we finish by extending bijectively $f':Z\to Z'$, $f':U\setminus U'\to U''$, and $h,h':\{v_{p+1},...,v_s\}\to W$ (ensuring that $h$ and $h'$ agree on $\{v_{p+1},...,v_s\}$). If this process succeeds, we are done.

However, if we get stuck at step $i_*\leq p$, define $\cP_{*}\coloneqq \{x_j:j<i_*\}$, $\cP_{*}'\coloneqq \{x'_j:j<i_*\}$,  and $N^*\coloneqq f(N_{i_*})\subset P$. We have that for all $w\in R(\tau)\setminus (P\cup \cP_{*}\cup \cP_*')$, $$c(N^*w)=c(N^*x_{i_*}).$$ 
But recalling that $\rho\in I_u$ for all $u\in P$, we observe that for any $w\in \cR(P)\coloneqq R(\tau)\setminus (P\cup \cP_{*} \cup \cP_{*}' \cup \overline{P_{i_*}(f')})$, $$c(uw)+\cC_\rho(w)=c(ux_{i_*})+\cC_\rho(x_{i_*})$$ and so \begin{equation}\label{N^*}
c(N^*w)+|N^*|\cdot\cC_\rho(w)=c(N^*x_{i_*})+|N^*|\cdot\cC_\rho(x_{i_*})
\end{equation} for all $w\in \cR(P)$. It follows that $|N^*|\cdot\cC_\rho(w)=|N^*|\cdot\cC_\rho(x_{i_*})$ and so $\cC_\rho(w)=\cC_\rho(x_{i_*})$ because $|N^*|=|N_{i_*}|\leq 2d <p$. But also, taking any $w,w'\in \cR(P)$, we have $\cC_\rho(w)=\cC_\rho(x_{i_*})=\cC_\rho(w')$. Then, fixing some $u\in N^*$, by Remark \ref{rem:C-C} and the fact that $\rho \in I_u$, we have $\cC_j(w)=\cC_j(w')$ for all $j\in I_u$. Then since there is some $j\in I_w\cap I_u\neq \emptyset$ (here we use that $|I_w|,|I_u|> k/2$), by an identical argument we get that $\cC_q(w')=\cC_q(w'')$ for all $w',w''\in R_{w}\cap \cR(P)$ and $q\in I_w$. Lastly, note that because $N^*\subset P$, $$|\cR(P)|\geq  |R(\tau)|-|P|-2p+2-2d(k_\tau-1)> |R(\tau)|-(2+2d)p-2dk'.$$
\end{subproof}

We will apply \Cref{R_*} twice in order to avoid the vertex loss incurred by $P$ and $\cP$. First, for each $i\in [k]$ let $T_i\coloneqq \{u\in R(\tau):i\in I_u\}$ and let $T$ be the largest such set. Note that $|T|\geq |R(\tau)|/2> |U'|$ since $$k\cdot |T|\geq \sum_{ i\in[k]}|T_i| = \sum_{u\in R(\tau)} |I_u| \geq |R(\tau)|\cdot k/2.$$
Hence, we can fix some set $A\subset T$ of size $|U'|$, define a bijection $f_a:U'\to A$, and set $\cA=\{x_1,...,x_p\}\subset R(\tau)\setminus A$ such that $x_i\notin \overline{A_i(f_a)}$ for each $i\in [p]$. We know such an $\cA$ exists because $|R(\tau)|-2dp-2dk'>p$. Then we apply \Cref{R_*} to get the set $\cR(A)$ of size $|\cR(A)|>n-(1+2d)p+(2+d)k'$.

Next, for each $i\in [k]$ let us define $T'_i\coloneqq \{u\in \cR(A):i\in I_u\}$ and let $T'$ be the largest such set. Then fix some $B\subset T'$ of size $|B|=|A|=|U'|$, and define the bijection $f_b:U'\to B$. We know that this can be done because $$|T'|\geq |\cR(A)|/2 \geq (n- (1+2d)p +(2+d)k')/2\geq (d+1)^2p-(1+2d)p/2> pd^2+pd\geq 2dp \geq |U'|$$ where we use the fact that $n\geq 2(d+1)^2p$. Then take $\cB=\{b_1,...,b_p\}\subset \cR(A)\setminus B$
such that each $b_i\in \cR(A)\setminus (B\cup \overline{B_{i}(f_b)})$. We know such a set exists because $$|\cR(A)|-|B|-|\overline{B_{i}(f_b)}|\geq n- (1+2d)p +(2+d)k'-2dp-2dk'\geq 2d^2p+p+(2-d)k'> p.$$ 

Applying \Cref{R_*} again, we get the set $\cR(B)\subset R(\tau)$. Then for all $w\in \cR(B)$, $j\in I_w$, and $w',w''\in R_w\cap \cR(B)$, we have $\cC_j(w')=\cC_j(w'')$. The analogous statement also holds for $\cR(A)$. Since $$|\cR(A)\cap \cR(B)|\geq |\cR(A)|-|\cB'|-|\overline{B_{i}(f_b)}|\geq n- (2+4d)p>k',$$ we can observe that for all $w\in \cR \coloneqq \cR(A)\cup \cR(B)$, there is some $u\in R_w\cap \cR(A)\cap \cR(B)$ and therefore $\cC_j(w')=\cC_j(u)$ for all $w'\in R_w\cap \cR$ and $j\in I_w$. Then taking any $w''\in R_w\cap \cR$ we have $\cC_j(w')=\cC_j(u)=\cC_j(w'')$. Thus, recalling that by the definition of $I_w$, $c(ww')+\cC_j(w')=c(ww'')+\cC_j(w'')$, we get that $$c(ww')=c(ww'')\eqqcolon C_w$$ is constant. Finally, observe that because $B,\cB\subset \cR(A) $ and $\cR(B)$, $B$, and $\cB$ are pairwise disjoint, $$|\cR| \geq |\cR(B)| + |B|+|\cB| \geq |R(\tau)|-p+2-2dk' >  n+(2+d)k'.$$

We will now conclude the proof by showing that the graph obtained by restricting $E(\cR)$ to the majority color contains some subgraph $H$ which satisfies the conditions in \Cref{lem:mono}.
Let us begin by considering a partition $D\sqcup D'= \cR$ with $|D|\geq |D'|$ such that $C_x\neq C_{x'}$ for all $x\in D$ and $x'\in D'$. Then each such edge $xx'$ will have either $c(xx')\neq C_x$ or $c(xx')\neq C_{x'}$ (or both). But since each $x$ sees at most $k'$ neighbors in a color other than $C_x$, we have a bound on the number of edges between $D$ and $D'$ to be $k'(|D|+|D'|)$, and so, $|D|\cdot|D'|\leq k'(|D|+|D'|)$ and thus, $$|D'|\leq k'(1 + |D'|/|D|)\leq 2k'.$$ 
Let $g$ be the most common edge color in $E(\cR)$, and define the graph $H$ by $V(H)\coloneqq\{x\in \cR:C_x=g\}$ and $E(H)\coloneqq \{u,u'\in V(H): c(uu')=g\}$. Then by the above,  $|V(H)|\geq |\cR|-2k'\geq n +dk'$ and for each $Y=\{y_1,...,y_d\}\subset V(H)$, $$N(Y)\supset \bigcap_{y_i\in Y}  R_{y_i}$$ and so $|N(Y)|\geq n+dk'-d(k'+1)=n-d$. Thus, we can apply \Cref{lem:mono} to $G$ and $H$ to find a monochromatic embedding of $G$ into $H$, thereby concluding the proof.
\end{proof}

\section{Conclusion}

In their paper, the current author along with Katz, Lian, and Malekshahian \cite{bounded_degree} proved a linear bound for $R(G,\Gamma)$ when $G$ has maximum degree $\Delta$ but allowed $\Gamma$ to be a general abelian group of order up to $|\Gamma|\leq e(G)$.
It would be natural to try to use the techniques from \cite{bounded_degree} to extend the results in this paper to all abelian groups $\Gamma$ with $|\Gamma|\leq e(G)$ (thereby confirming Conjecture 1 in their paper). There are two key techniques that would need to be adapted to our setting. The first deals with the reason behind the restriction $2d<p$. That is, if we allow general abelian groups, we would need to deal with the possibility that there could be $v\in V(G)$ and $0\neq g\in \Gamma$ such that $d(v)\cdot g =0$. As a result, the argument in \cref{N^*} that $|N^*|\cdot \cC_i(w)=|N^*|\cdot \cC_i(w') \implies \cC_i(w)=\cC_i(w')$ (recall that $|N^*|=|N(v_i)|$ for some $v_i\in J$) would no longer hold. Further, it would no longer be possible to use Cauchy-Davenport. Instead, we may replace it with Kneser’s Theorem \cite{Kneser_1953} but this introduces significant complications in insuring that $\{c(\overline{u}):\overline{u}\in \{h(v_1),h'(v_1)\}\times...\times \{h(v_s),h'(v_s)\}\}=\Gamma$. 

Second, in order to allow $|\Gamma|=e(G)$ we could replace $h$ and $h'$ in \Cref{lem:embedding} with a set of functions $h_1,...,h_\alpha$ such that $\sum_{i\in [s]}|\{c(h_j(v_i)f(N_i)):j\in [\alpha]\}|\geq |\Gamma|+s-1$ (this corresponds to the blueprints realized at intensity $\alpha$ in \cite{bounded_degree}). However, it is unclear how the lack of such an embedding would allow us to argue for the existence of a monochromatic copy of $G$ in $R$.

\section*{Acknowledgments}
We would like to thank Matthew Jenssen and Adva Mond for helpful discussion and feedback on a previous draft of this paper.

\bibliographystyle{plain}
\bibliography{bibliography}

\end{document}